\documentclass[12pt,a4paper]{amsart} 
\usepackage[utf8]{inputenc}
\usepackage[english]{babel}
\usepackage{amsmath}
\usepackage{amsfonts}
\usepackage{amssymb}
\usepackage{mathrsfs}
\usepackage{color}

\usepackage{amsthm}
\newtheorem{thm}{Theorem}
\newtheorem{cor}{Corollary}

\theoremstyle{definition}

\usepackage{graphics}
\usepackage{epstopdf}
\usepackage{epsfig}

\usepackage{url}
\usepackage{hyperref}

\setlength{\parindent}{0pt}
\setlength{\parskip}{4pt}

\usepackage[a4paper,top=2.5cm,bottom=2.8cm,
left=3.2cm,right=3.2cm]{geometry}

\newcounter{num} 
\setcounter{num}{0} 
\newcommand{\Fg}[1][]{\thenum}

\newcommand\Al{\alpha}
\newcommand\B{\beta}
\newcommand\De{\delta}
\newcommand\G{\gamma}
\newcommand\Ep{\varepsilon}

\newcommand\CC{\mathcal{C}}

\parindent=20pt

\pagestyle{myheadings}
\markleft{\hfill \textsc{A generalization of Pappus chain theorem} \hfill}
\markright{\hfill \textsc{       } \hfill}

\begin{document}
\bigskip

\begin{center}
{\Large \textbf{A generalization of Pappus chain theorem}} \\
\medskip
\bigskip
\textsc{Hiroshi Okumura} \\
Takahanadai Maebashi Gunma 371-0123, Japan\\
e-mail: \href{mailto:hokmr@yandex.com}{hokmr@yandex.com}\\

\end{center}
\bigskip

\noindent
\textbf{Abstract.} We generalize Pappus chain theorem and give an analogue 
to this theorem. 

\medskip\noindent
\textbf{Keywords.} Pappus chain theorem

\medskip\noindent
\textbf{Mathematics Subject Classification (2010).} 
01A20, 51M04  

\medskip

\section{Introduction}

Let $\Al$, $\B$ and $\G$ be circles with diameters $BC$, $CA$ and $AB$, 
respectively for a point $C$ on the segment $AB$. 
Pappus chain theorem says: if $\{\Al=\De_0, \De_1, \De_2,\cdots \}$ is 
a chain of circles whose members touch $\B$ and $\G$, the 
distance between the center of the circle $\De_n$ and the line $AB$ equals 
$2nr_n$, where $r_n$ is the radius of $\De_n$ (see Figure \ref{far}). In this 
article we give a simple generalization of this theorem and show that if we 
consider a line passing through the centers of two circles in the chain 
instead of $AB$, a similar theorem still holds. 

\begin{figure}[h!]
	\centering
	\includegraphics[width=0.4\linewidth]{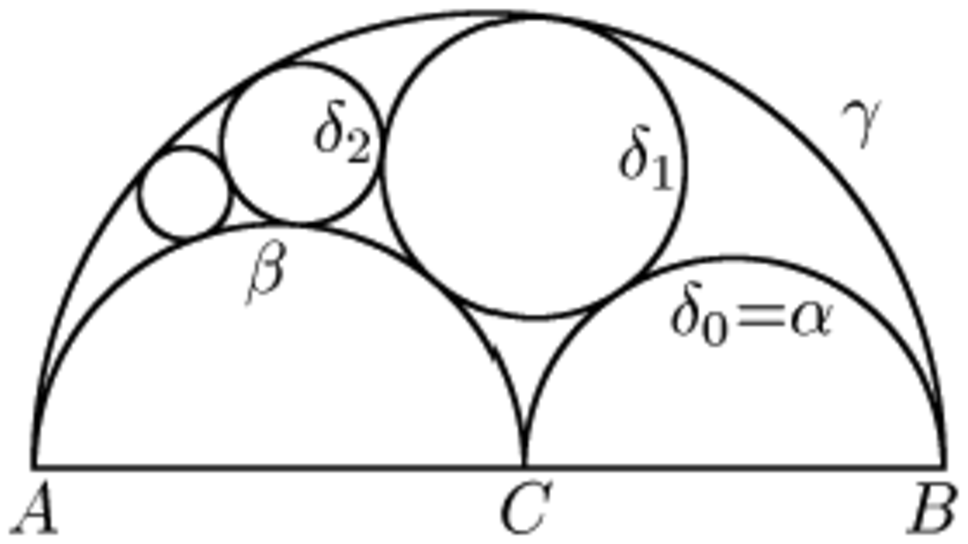}
        \refstepcounter{num}\label{far}\\
Figure \Fg .
\end{figure}

\section{A generalization of Pappus chain theorem}
Let $\{\Ep_1, \Ep_2,\Ep_3\}=\{ \Al, \B,\G\}$ and $\{P_1,P_2,P_3\}=\{A,B,C\}$, 
where $P_3P_1$ and $P_1P_2$ are diameters of $\Ep_2$ and $\Ep_3$, respectively. 
We consider the chain of circles $\CC=\{\cdots, \De_{-2}, \De_{-1},\Ep_1=\De_0, 
\De_1, \De_2, \cdots \}$ whose members touch the circles $\Ep_2$ and $\Ep_3$. 
Let $r_n$ be the radius of $\De_n$. Pappus chain theorem is obtained in the case 
$i=0$ in the following theorem (see Figure \ref{fcl4}).

\begin{thm} \label{t1} 
If $D_i$ is the center of the circle $\De_i\in\CC$ and $H_i(n)$ is the point of 
intersection of the line $P_1D_i$ and the perpendicular to $AB$ from $D_n$, 
the following relation holds.
\begin{equation}\label{eq1} 
|D_nH_i(n)|=2|n-i|r_n. 
\end{equation} 
\end{thm}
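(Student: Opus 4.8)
The plan is to reduce the statement to a chain of congruent circles by an inversion centred at $P_1$. Choose Cartesian coordinates with $P_1$ at the origin and the line $AB$ as the $x$-axis, and let $\iota$ be the inversion of power $p$ centred at $P_1$. The circles $\Ep_2$ and $\Ep_3$ have diameters $P_3P_1$ and $P_1P_2$ lying along $AB$, so they pass through $P_1$ and cross $AB$ at right angles; hence $\iota(\Ep_2)$ and $\iota(\Ep_3)$ are two lines perpendicular to $AB$, which I write as $x=c-\rho$ and $x=c+\rho$. These lines avoid the origin, so $c\neq\pm\rho$; we may also assume $c\neq0$, since if $c=0$ the line $P_1D_i$ is perpendicular to $AB$ and $H_i(n)$ is not defined.

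First I would describe the image of the chain $\CC$. No member of $\CC$ passes through $P_1$ (otherwise its image would be a line tangent to the two parallel lines $x=c\pm\rho$, which is impossible), so every $\iota(\De_n)$ is a circle tangent to both of these lines; thus it has radius $\rho$ and its centre lies on the mid-line $x=c$, and since consecutive members of the chain are tangent the centres are spaced $2\rho$ apart. Moreover $\De_0=\Ep_1$ is symmetric in $AB$ and avoids $P_1$, so $\iota(\De_0)$ is symmetric in the $x$-axis, hence centred on it. Orienting the $y$-axis suitably, the centre of $\iota(\De_n)$ is then $O_n=(c,2n\rho)$.

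Now I would invert back. Applying $\iota$ to the circle $\iota(\De_n)$ — which has centre $O_n$ and radius $\rho$ — returns $\De_n$; by the standard formula for the image of a circle under an inversion, the centre of $\De_n$ is \emph{not} $\iota(O_n)$ but $\dfrac{p}{Q_n}\,O_n$, and its radius is $\dfrac{p\rho}{|Q_n|}$, where $Q_n:=|O_n|^2-\rho^2=c^2-\rho^2+4n^2\rho^2$ is the power of $P_1$ with respect to $\iota(\De_n)$ (and $Q_n\neq0$, since $P_1$ lies on no $\iota(\De_n)$). Hence
\begin{equation*}
D_n=\frac{p}{Q_n}\,(c,2n\rho),\qquad D_i=\frac{p}{Q_i}\,(c,2i\rho),\qquad r_n=\frac{p\rho}{|Q_n|}.
\end{equation*}
The line $P_1D_i$ passes through the origin with direction $(c,2i\rho)$, and the perpendicular to $AB$ through $D_n$ is the line $x=pc/Q_n$; since $c\neq0$ they meet at $H_i(n)=\dfrac{p}{Q_n}\,(c,2i\rho)$. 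Therefore $D_n-H_i(n)=\dfrac{2p\rho}{Q_n}\,(0,\,n-i)$, so that $|D_nH_i(n)|=\dfrac{2p\rho\,|n-i|}{|Q_n|}=2|n-i|\,r_n$, which is \eqref{eq1}; taking $i=0$ recovers Pappus chain theorem, since $D_0$ then lies on $AB$.

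The delicate point is the inversion step: because $\iota$ does not carry the centre of a circle to the centre of its image, one must use the formula above rather than $\iota(O_n)$, and one must check that the inverted picture really is a linear chain of congruent circles between two parallel lines, correctly indexed so that $\De_0=\Ep_1$ becomes the circle centred on $AB$. Once those are in place, only the short computation just displayed remains.
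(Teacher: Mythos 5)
Your proof is correct, and it shares its core idea with the paper's: invert at $P_1$ so that the chain becomes a column of congruent circles squeezed between the two parallel lines $\iota(\Ep_2)$ and $\iota(\Ep_3)$. The difference is in how the two arguments get back to the original picture. The paper chooses the inversion radius so that the inverting circle is orthogonal to $\De_n$; then $\De_n$ is its own image, the image chain consists of circles of radius $r_n$ centred on the vertical line through $D_n$, and $H_i(n)$ is identified outright as the centre of the image of $\De_i$ (it lies on $P_1D_i$ by the axial symmetry of the configuration about that line, and on $x=x_n$ because the image chain is congruent), at height $2ir_n$; the identity is then just $|2nr_n-2ir_n|$, with no need for the inverse-of-a-circle formula. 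You instead take an arbitrary power $p$ and invert back computationally, which forces you to use the formula $D_n=\frac{p}{Q_n}O_n$, $r_n=\frac{|p|\rho}{|Q_n|}$ for the centre and radius of the image of a circle. Your route is longer, but it makes explicit precisely the points the four-line proof in the paper glosses over: that no member of $\CC$ passes through $P_1$, that the rectified chain is indexed so that $\iota(\De_0)$ is the circle centred on $AB$, and---the step most often botched---that inversion does not carry centres to centres, so one must either invoke the formula (as you do) or arrange for the relevant circle to be fixed (as the paper does). Two small touch-ups: write $|p|$ in the radius formula or fix $p>0$ at the outset; and your exclusion of $c=0$ is the right call, since in that case $P_1D_i$ and the perpendicular from $D_n$ coincide and $H_i(n)$ is undefined (this degeneracy actually occurs, e.g.\ for $\CC_{\G}$ with $a=b$, where all the $x_n$ vanish), so it is implicitly excluded by the statement of the theorem.
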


\begin{proof} 
We invert the figure in the circle with center $P_1$ orthogonal to $\De_n$. 
Then $\De_n$ and $P_1D_i$ are fixed and $\Ep_2$ and $\Ep_3$ are inverted to the 
tangents of $\De_n$ perpendicular to $AB$. Let $F$ be the foot of perpendicular 
from $D_n$ to $AB$. Since $H_i(n)$ is the center of the image of $\De_i$, we 
have $|H_i(n)F|=2ir_n$, while $|D_nF|=2nr_n$. Hence we get 
\eqref{eq1}. 
\end{proof}

\begin{figure}[h!]
	\centering
	\includegraphics[width=0.65\linewidth]{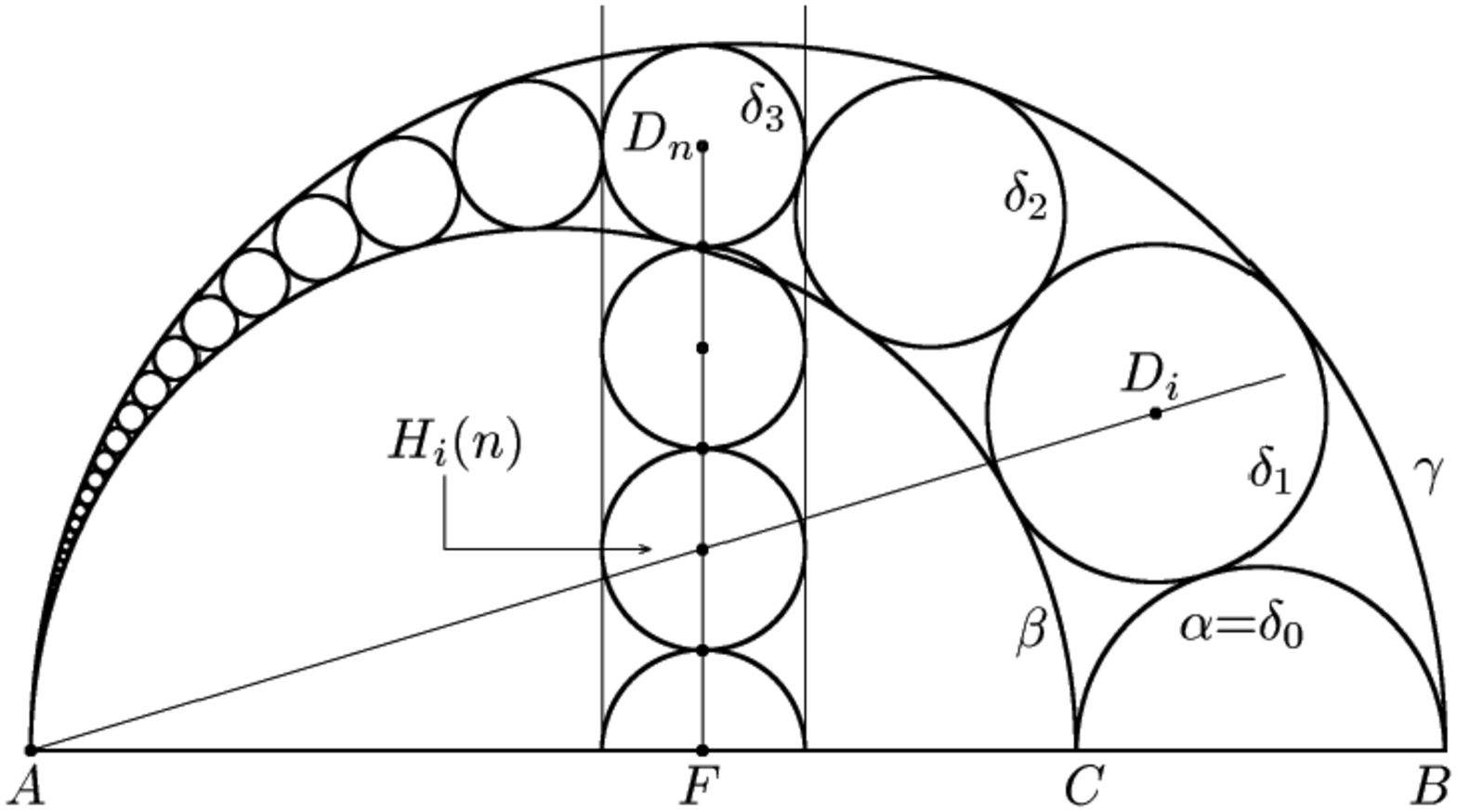}
        \refstepcounter{num}\label{fcl4}\\
Figure \Fg : $\CC=\CC_{\Al}$, $i=1$, $n=3$
\end{figure}

\section{An analogue to Pappus chain theorem}

Let $a$ and $b$ be the radii of the circles $\Al$ and $\B$, respectively. 
We use a rectangular coordinate system with origin $C$ such that $A$ and $B$ 
have coordinates $(-2b,0)$ and $(2a,0)$, respectively. If $\Ep_1=\Al$, 
the chain is explicitly denoted by $\CC_{\Al}$. The chains $\CC_{\B}$ and 
$\CC_{\G}$ are defined similarly. Let $c=a+b$ and let 
$(x_n,y_n)$ be the center coordinate of the circle $\De_n\in\CC$. We have 
$y_n=2nr_n$ 
by Pappus chain theorem, and $x_n$ and $r_n$ are given in Table 1 \cite{LC1,LC2}.


\begin{center}
\begin{table}[htbp]
\begin{tabular}{|c|c|c|} \hline
Chain  & $x_n$  & $r_n$    \\ \hline\hline 
$\CC_{\Al}$&$-2b+\dfrac{bc(b+c)}{n^2a^2+bc}$&$\dfrac{abc}{n^2a^2+bc}$ \\ \hline
$\CC_{\B}$ &$2a-\dfrac{ca(c+a)}{n^2b^2+ca}$ &$\dfrac{abc}{n^2b^2+ca}$\\ \hline
$\CC_{\G}$ &$\dfrac{ab(b-a)}{n^2c^2-ab}$&$\dfrac{abc}{n^2c^2-ab}$\\ \hline
\end{tabular} \\
\vskip3mm
Table 1: $y_n=2nr_n$
\end{table}
\end{center}
\vskip-5mm

Let $l_{i,j}$ $(i\not=j)$ be the line passing through the centers of the circles 
$\De_i$ and $\De_j$ for $\De_i,\De_j\in\CC$. It is expressed by the equations 
\begin{eqnarray}\label{eq3}
\left\{
\begin{array}{l}
2(bc-a^2ij)x+a(b+c)(i+j)y-2b(2a^2ij-c(b-c))=0,  \\
2(ca-b^2ij)x-b(c+a)(i+j)y+2a(2b^2ij+c(c-a))=0,  \\
2(ab+c^2ij)x+c(a-b)(i+j)y-2ab(a-b)=0 
\end{array}
\right.
\end{eqnarray}
in the cases $\CC=\CC_{\Al}$, $\CC=\CC_{\B}$, $\CC=\CC_{\G}$, respectively. 

Let $H_{i,j}(n)$ be the point of intersection of the lines $l_{i,j}$ and 
$x=x_n$ with $y$-coordinate $h_{i,j}(n)$. Let $d_{i,j}(n)=h_{i,j}(n)-y_n$, 
i.e., $d_{i,j}(n)$ is the signed distance between the center of $\De_n$ and 
$H_{i,j}(n)$. 
The following theorem is an analogue to Pappus chain theorem (see Figure \ref{fcl3}). 
It is also a generalization of \cite{ALTOK}.

\begin{thm} \label{t1} 
If $i+j\not=0$, then $d_{i,j}(n)=f_{i,j}(n)r_n$ holds, where 
\begin{equation}\label{eqf}
f_{i,j}(n)=\frac{2(n-i)(n-j)}{i+j}. 
\end{equation}
\end{thm}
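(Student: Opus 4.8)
The plan is to prove the identity $d_{i,j}(n)=f_{i,j}(n)\,r_n$ by a direct computation in the coordinate system introduced above, feeding the data of Table~1 into the line equations \eqref{eq3}. Throughout, write $q_n$ for the common denominator appearing in $x_n$ and $r_n$ in Table~1, so that $q_n=n^2a^2+bc$, $q_n=n^2b^2+ca$ and $q_n=n^2c^2-ab$ for $\CC=\CC_{\Al}$, $\CC=\CC_{\B}$ and $\CC=\CC_{\G}$ respectively; in every case $r_n=abc/q_n$, and $y_n=2nr_n=2nabc/q_n$ by Pappus chain theorem.

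The first step is to locate $H_{i,j}(n)$. Fix a chain and take the corresponding equation of $l_{i,j}$ from \eqref{eq3}. Because $i+j\neq0$ and $a,b,c>0$, the coefficient of $y$ in that equation is nonzero (in the case $\CC_{\G}$ one must also assume $a\neq b$; if $a=b$ the line $l_{i,j}$ coincides with $x=x_n$ and $H_{i,j}(n)$ is undefined), so $l_{i,j}$ is not vertical and meets the line $x=x_n$ in a single point. Solving the chosen equation for $y$ and substituting $x=x_n$ from Table~1, the three computations should all collapse to the same closed form
\[
 h_{i,j}(n)=\frac{2abc\,(n^2+ij)}{(i+j)\,q_n}.
\]
A reassuring check on this formula is that $h_{i,j}(i)=2i\,abc/q_i=y_i$ and, by symmetry in $i$ and $j$, $h_{i,j}(j)=y_j$, exactly as it must be since $H_{i,j}(i)=D_i$ and $H_{i,j}(j)=D_j$.

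Granting that formula, the theorem is immediate. Since $d_{i,j}(n)=h_{i,j}(n)-y_n$ and $y_n=2nabc/q_n$,
\[
 d_{i,j}(n)=\frac{2abc}{q_n}\cdot\frac{n^2+ij-n(i+j)}{i+j}=\frac{2abc}{q_n}\cdot\frac{(n-i)(n-j)}{i+j}=\frac{2(n-i)(n-j)}{i+j}\,r_n,
\]
which is precisely $d_{i,j}(n)=f_{i,j}(n)\,r_n$ with $f_{i,j}$ as in \eqref{eqf}.

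The real work, and the step I expect to be the main obstacle, is the algebra behind the displayed expression for $h_{i,j}(n)$: one must run the three reductions separately and watch the polynomials in $a,b,c$ cancel down to the same quantity. A way to organise this — which incidentally re-derives \eqref{eq3} — is to observe that in each chain both $x_n$ and the expression $2abc(n^2+ij)/\bigl((i+j)q_n\bigr)$ are affine functions of $1/q_n$; indeed, writing $q_n=An^2+B$ one has $n^2+ij=\bigl(q_n+(Aij-B)\bigr)/A$. Hence, as $n$ varies, the point $\bigl(x_n,\,2abc(n^2+ij)/((i+j)q_n)\bigr)$ traces a fixed straight line, and (using $i\neq j$ and $i+j\neq0$, so that $q_i\neq q_j$) this line passes through $D_i$ and $D_j$, so it is $l_{i,j}$; its second coordinate over $x=x_n$ is therefore $h_{i,j}(n)$, with no need to solve \eqref{eq3} at all. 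The only genuine exception remains the degenerate case $a=b$ in $\CC_{\G}$, where $l_{i,j}$ coincides with $x=x_n$.
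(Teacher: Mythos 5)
Your proposal is correct and follows essentially the same route as the paper: substitute $x=x_n$ from Table~1 into the relevant line equation of \eqref{eq3}, obtain $h_{i,j}(n)=2(n^2+ij)r_n/(i+j)$, and subtract $y_n=2nr_n$ to factor out $2(n-i)(n-j)/(i+j)$. Your added observations — the sanity check $h_{i,j}(i)=y_i$, the parametrization of $l_{i,j}$ as an affine image of $1/q_n$, and the degenerate case $a=b$ in $\CC_{\G}$ — are nice refinements but do not change the argument.
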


\begin{proof}
We consider the chain $\CC_{\Al}$. By Table 1 and \eqref{eq3}, we get 
$$
h_{i,j}(n)=\frac{2(n^2+ij)abc}{(i+j)(n^2a^2+bc)}=2\frac{(n^2+ij)}{(i+j)}r_n. 
$$
Therefore 
$$
d_{i,j}(n)=h_{i,j}(n)-y_n=2\frac{(n^2+ij)}{(i+j)}r_n-2nr_n
=\frac{2(n-i)(n-j)}{i+j}r_n. 
$$
The rest of the theorem can be proved in a similar way. 
\end{proof}

\begin{figure}[h!]
	\centering
	\includegraphics[width=.85\linewidth]{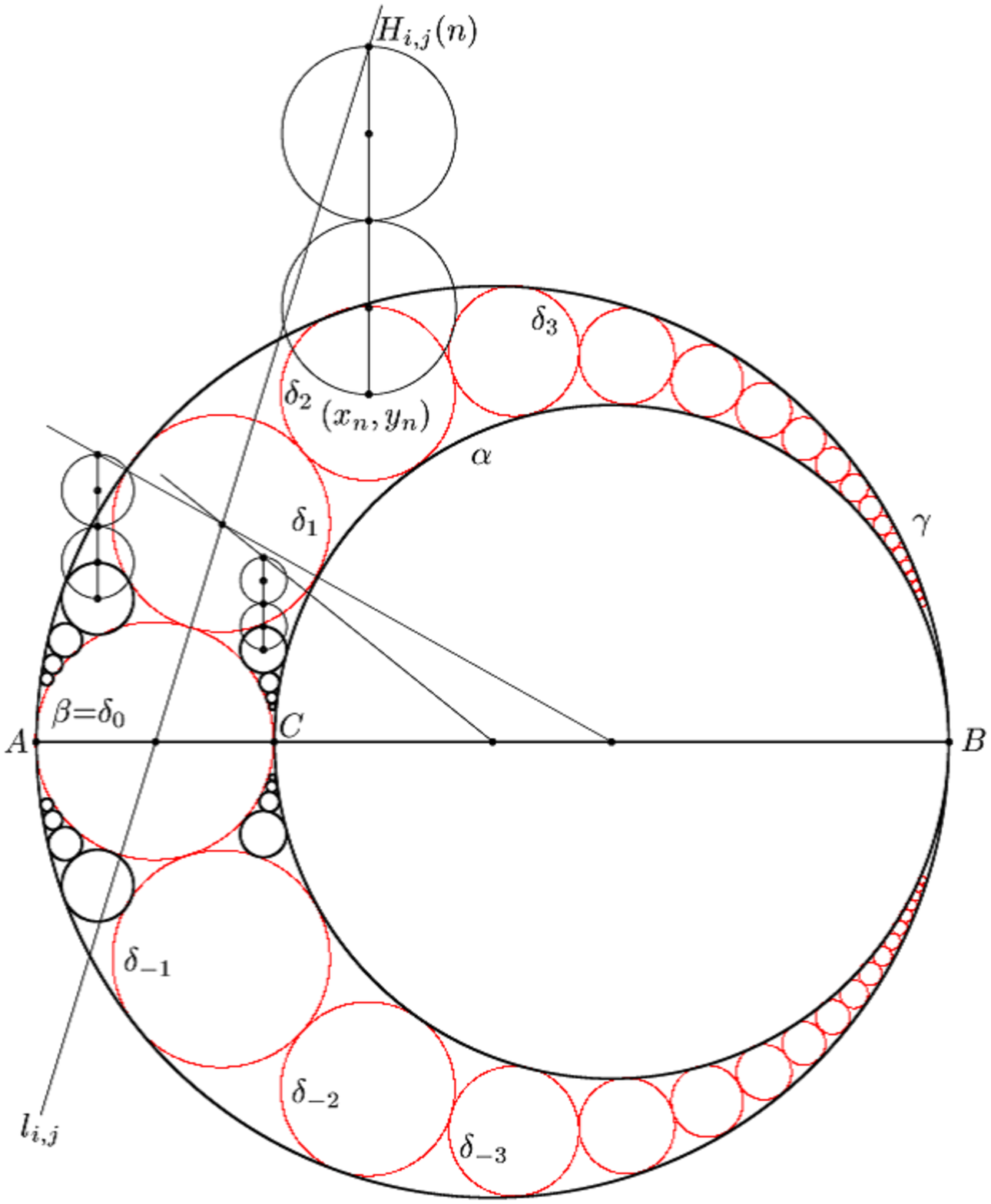}
	\refstepcounter{num}\label{fcl3}\\
Figure \Fg : $\CC=\CC_{\B}$, $\{i,j\}=\{0,1\}$, $n=2$
\end{figure}

\begin{cor}\label{c1}
If $i=0$ in Theorem \ref{t1}, the following statements hold. \\
\noindent{\rm (i)} If $j=\pm1$, $d_{i,j}(n)=\pm 2n(n\mp1)r_n$. \\
\noindent{\rm (ii)} If $j=\pm2$, $d_{i,j}(n)=\pm n(n\mp2)r_n$. \\
\end{cor}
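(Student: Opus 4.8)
The plan is to derive Corollary \ref{c1} directly from Theorem \ref{t1} by substituting $i=0$ into the formula \eqref{eqf} for $f_{i,j}(n)$. With $i=0$ and $i+j=j\neq 0$, the general expression collapses to
\[
f_{0,j}(n)=\frac{2(n-0)(n-j)}{0+j}=\frac{2n(n-j)}{j},
\]
so that $d_{0,j}(n)=\dfrac{2n(n-j)}{j}\,r_n$. Since $\De_0=\Ep_1$ has center on the line $AB$ and the chain is indexed so that $D_0$ is genuinely a member of $\CC$, the hypothesis $i\neq j$ of Theorem \ref{t1} is automatically satisfied whenever $j\neq 0$, and in particular for $j=\pm1,\pm2$.

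Next I would simply specialize $j$. For $j=1$ we obtain $d_{0,1}(n)=2n(n-1)r_n$, and for $j=-1$ we obtain $d_{0,-1}(n)=\dfrac{2n(n+1)}{-1}r_n=-2n(n+1)r_n$; the two cases combine into $d_{0,\pm1}(n)=\pm2n(n\mp1)r_n$, which is statement (i). For $j=2$ we obtain $d_{0,2}(n)=\dfrac{2n(n-2)}{2}r_n=n(n-2)r_n$, and for $j=-2$ we obtain $d_{0,-2}(n)=\dfrac{2n(n+2)}{-2}r_n=-n(n+2)r_n$; these combine into $d_{0,\pm2}(n)=\pm n(n\mp2)r_n$, which is statement (ii).

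There is essentially no obstacle here: the corollary is a routine specialization of the main theorem, and the only points that need a word of care are (a) checking that the symbols $\pm$ and $\mp$ are being read consistently — the upper sign throughout corresponds to $j$ positive, the lower sign to $j$ negative — and (b) noting that the sign of $d_{0,j}$ flips with the sign of $j$ because $j$ appears in the denominator of $f_{0,j}(n)$ while the numerator $2n(n-j)$ is handled by the $n\mp j$ factor. Once the substitution is performed and the signs are tracked, (i) and (ii) follow immediately, so the proof is a two-line computation invoking Theorem \ref{t1}.
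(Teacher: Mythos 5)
Your proposal is correct and is exactly the intended argument: the paper states Corollary \ref{c1} as an immediate consequence of Theorem \ref{t1}, and your substitution $i=0$, $j=\pm1,\pm2$ into $f_{i,j}(n)=\frac{2(n-i)(n-j)}{i+j}$ with careful sign-tracking is precisely how it follows. No gaps.
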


\begin{cor}\label{c2}
$d_{i,j}(n)-d_{i,j}(-n)=-4nr_n$ for any integers $i$, $j$, $n$ with $i\not=\pm j$.
\end{cor}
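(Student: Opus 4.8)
The plan is to deduce the identity directly from Theorem \ref{t1}, together with one elementary symmetry observation. First I would note that the hypothesis $i\neq\pm j$ is exactly what makes the statement meaningful: $i\neq j$ ensures the line $l_{i,j}$ through the centers of $\De_i$ and $\De_j$ is well defined, while $i+j\neq 0$ means Theorem \ref{t1} applies, so that $d_{i,j}(\pm n)=f_{i,j}(\pm n)\,r_{\pm n}$ with $f_{i,j}$ given by \eqref{eqf}.

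The key observation is that $r_{-n}=r_n$. This is immediate from Table 1: in each of the three chains the radius $r_n$ is a rational function of $n^2$ (namely $abc/(n^2a^2+bc)$, $abc/(n^2b^2+ca)$, $abc/(n^2c^2-ab)$), hence invariant under $n\mapsto -n$. Consequently
\[
d_{i,j}(n)-d_{i,j}(-n)=f_{i,j}(n)\,r_n-f_{i,j}(-n)\,r_{-n}=\bigl(f_{i,j}(n)-f_{i,j}(-n)\bigr)r_n,
\]
so the whole problem reduces to evaluating $f_{i,j}(n)-f_{i,j}(-n)$.

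That last step is a one-line computation: by \eqref{eqf},
\[
f_{i,j}(n)-f_{i,j}(-n)=\frac{2}{i+j}\Bigl[(n-i)(n-j)-(n+i)(n+j)\Bigr],
\]
and expanding the two products, the $n^2$ and $ij$ terms cancel and the bracket collapses to $-2(i+j)n$, giving $f_{i,j}(n)-f_{i,j}(-n)=-4n$. Substituting back yields $d_{i,j}(n)-d_{i,j}(-n)=-4n\,r_n$, which is the assertion. There is essentially no obstacle here; the only point requiring a moment's care is the bookkeeping of signs — that $f_{i,j}$ changes under $n\mapsto -n$ while $r_n$ does not — and, if one wishes to be thorough, checking that the $r_{-n}=r_n$ reasoning is uniform across all three chains $\CC_{\Al},\CC_{\B},\CC_{\G}$.
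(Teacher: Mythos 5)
Your proof is correct and is exactly the intended derivation: the paper states this corollary without proof, as an immediate consequence of Theorem \ref{t1} via $r_{-n}=r_n$ (clear from Table 1) and the computation $f_{i,j}(n)-f_{i,j}(-n)=-4n$. Your reading of the hypothesis $i\neq\pm j$ (namely $i\neq j$ for $l_{i,j}$ to exist and $i+j\neq0$ for the theorem to apply) is also the right one.
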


\if0
\section{The case $i+j=0$}\label{secdz}

We consider Theorem \ref{t1} in the case $i+j=0$ by the definition of 
division by zero: $z/0=0$ for any real number $z$ \cite{kmsy14}. 
From \eqref{eqf}, 
we get $f_{i,j}(n)=-2n$ if $i+j=0$. Hence we get the same conclusion 
as to Pappus chain theorem.

\begin{thm}\label{t2}
If $i+j=0$, $d(n)=-2nr_n$ holds. 
\end{thm}

Since two parallel lines meet in the origin \cite{MOS17}, $H_{i,j}(n)$ 
coincides with the origin if $i+j=0$, i.e., $h_{i,j}(n)=0$. Hence Theorem 
\ref{t2} can also be derived from this fact with Pappus chain theorem. The 
theorem shows that {\sl the distances from the center of the circle $\De_n$ 
to the lines $AB$ and $x=x_i$ are the same for any integer $i$}. This is one 
of the unexpected phenomena for perpendicular lines derived from the definition 
of the division by zero. For another such example see \cite{OKIJG18}. Since 
Theorem \ref{t1} holds even in the case $i+j=0$, we restate it as follows: 

\begin{thm}\label{t3}
$d_{i,j}(n)=f_{i,j}(n)r_n$ holds for any integers $i$, $j$, $n$, where  
$i\not=j$.
\end{thm}

Now Corollary \ref{c2} holds even in the case $i+j=0$. 

\begin{cor}\label{c3}
$d_{i,j}(n)-d_{i,j}(-n)=-4nr_n$ for any integers $i$, $j$, $n$ with $i\not=j$.
\end{cor}

\section{Conclusion}
The recent definition, $z/0=0$ for a real number $z$, yields several 
unexpected phenomena, which are especially significant for perpendicular 
lines. We get one more such result in section \ref{secdz}, for which 
we are still looking for a suitable interpretation. 
\fi

\bigskip

\end{document}